\numberwithin{equation}{section}
\newtheorem{theorem}{Theorem}[section]
\newtheorem{corollary}{Corollary}[theorem]
\begin{document}
\author{Alexander E Patkowski}
\title{A note on arithmetic diophantine series}

\maketitle
\begin{abstract}We consider some asymptotic analysis for series related to the work of Hardy and Littlewood on Diophantine approximation, as well as Davenport. In particular, we expand on ideas from some previous work on arithmetic series and the RH. \end{abstract}

\keywords{\it Keywords: \rm Arithmetic series; Riemann zeta function; M$\ddot{o}$bius function}

\subjclass{ \it 2010 Mathematics Subject Classification 11L20, 11M06.}

\section{Introduction} 
In a 1923 paper by Hardy and Littlewood [4], we find some mention of the series 
$$\sum_{n\ge1}\frac{\bar{B}_m(nx)}{n^{s}},$$
with $\sigma:=\Re(s)>1,$ in the setting of analysis on problems of Diophantine approximation. Here $\bar{B}_m(x):=\sum_{j\ge0}^{m}\binom{m}{j}B_{m-j}\{x\}^j,$ where $B_j$ is the $j$th Bernoulli number, and $\{x\}=x-[x],$ $[x]$ being the floor function. Not long after, Davenport's famous work [2] was published showing interesting properties on arithmetic series of the form
\begin{equation}\sum_{n\ge1}\frac{a_n\bar{B}_1(nx)}{n},\end{equation}
where $a_n$ is taken to be a multiplicative arithmetic function $a:\mathbb{N}\rightarrow\mathbb{C}.$ Here and throughout the paper we will take the set of natural numbers $\mathbb{N}$ to exclude $0,$ and write $\mathbb{N}_0$ to mean non-negative integers. Recall that the M$\ddot{o}$bius function is denoted by $\mu(n)$ [10]. The series (1.1) has been explored to a great extent [1, 6, 9]. \par One of our main results is given in the following.
\begin{theorem} Let $k\ge1$ be a natural number, and let $\Upsilon_k(x)$ be a polynomial of degree $k$ plus a term of the form $\dot{h}\log(x)x^{k},$ where $\dot{h}$ is a computable constant. Put $C_k=\frac{k!}{(2\pi i)^k}(1+(-1)^k)\frac{\zeta'(k)}{\zeta(k)},$ when $k>1,$ and $C_1=0.$ We have that the Riemann Hypothesis is equivalent to 
\begin{equation}\sum_{n\ge1}\frac{\mu(n)\log(n)}{n^k}\bar{B}_{k}(nx)=C_k+\Upsilon_{k-1}(x)+O(x^{k-\frac{1}{2}}),\end{equation}
as $x\rightarrow0^{+}.$
\end{theorem}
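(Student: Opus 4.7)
The plan is to establish (1.2) by Mellin inversion and contour shift, in both directions. Write $F(x)$ for the left-hand side of (1.2). Using the Fourier expansion $\bar{B}_k(y) = -\frac{k!}{(2\pi i)^k}\sum_{m\ne 0}e^{2\pi i m y}/m^k$ to compute the (analytically continued) Mellin transform of $\bar{B}_k$, and using $\sum_{n\ge 1}\mu(n)\log n/n^{w}=\zeta'(w)/\zeta(w)^2$, I obtain after the partial cancellation of $\zeta(k+s)$ a representation
$$F(x)=\frac{1}{2\pi i}\int_{(c)}\alpha(s)\,\frac{\zeta'(k+s)}{\zeta(k+s)}\,x^{-s}\,ds,\quad \alpha(s):=-\frac{k!\,\Gamma(s)(2\pi)^{-s}}{(2\pi i)^k}\bigl(e^{i\pi s/2}+(-1)^k e^{-i\pi s/2}\bigr),$$
initially on a vertical line $\Re(s)=c$ with $c$ large enough for absolute convergence.

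The next step is to shift the contour leftward to $\Re(s)=1/2-k$, collecting residues along the way. For $k$ even, the simple pole of $\Gamma(s)$ at $s=0$ combines with the regular value $\zeta'(k)/\zeta(k)$ and the nonvanishing evaluation $1+(-1)^k$ of the trigonometric factor to yield (up to sign) the constant $C_k$; for $k$ odd the trigonometric factor has a simple zero at $s=0$ that cancels the $\Gamma$-pole, producing no residue there, consistent with $C_k=0$. Residues at other negative integers $s=-j$ with $0<j<k-1$ contribute monomials $x^j$ absorbed into $\Upsilon_{k-1}(x)$. At $s=1-k$, the pole of $\zeta$ at $w=1$ makes $\zeta'(k+s)/\zeta(k+s)$ have a simple pole with residue $-1$, while the zero of the trigonometric factor cancels the $\Gamma$-pole so that $\alpha(s)$ is regular there; expanding $x^{-s}=x^{k-1}\bigl(1-(s-(1-k))\log x+\cdots\bigr)$ and taking the residue yields both an $x^{k-1}$ monomial and a $\dot{h}\log(x)x^{k-1}$ term with $\dot{h}=-\alpha(1-k)$, exhausting $\Upsilon_{k-1}(x)$.

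It remains to bound the shifted integral on $\Re(s)=1/2-k$. Under RH the nontrivial zeros of $\zeta(k+s)$ lie precisely on this line, so the contour is indented around each; exponential decay of $|\Gamma(s)|$ in $|\Im s|$ combined with RH-conditional bounds on $\zeta'(k+s)/\zeta(k+s)$ gives an integral of order $O(x^{k-1/2})$, establishing the forward implication. For the converse, if (1.2) holds, then applying the Mellin transform (suitably regularized) to $F(x)-C_k-\Upsilon_{k-1}(x)=O(x^{k-1/2})$ shows that $\alpha(s)\zeta'(k+s)/\zeta(k+s)$, after subtracting the explicit polar parts already extracted, extends holomorphically into $\Re(s)>1/2-k$, which forces $\zeta(k+s)$ to have no zeros with $\Re(s)>1/2-k$, i.e.\ RH.

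The principal technical obstacle is the contour shift: the line $\Re(s)=1/2-k$ carries (under RH) infinitely many poles of $\zeta'/\zeta$, and to get the sharp error $O(x^{k-1/2})$ rather than $O(x^{k-1/2+\varepsilon})$ one must control the indented integral via delicate conditional estimates on $\zeta'/\zeta$ along the critical line. A subsidiary issue is justifying the termwise Mellin interchange in the first step, since $F(x)$ is only conditionally convergent and $\bar{B}_k$ does not decay at infinity; one would regularize by Abel summation or a smooth cutoff that is ultimately removed by a standard Tauberian argument.
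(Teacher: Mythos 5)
Your proposal follows essentially the same route as the paper: the Fourier expansion of $\bar{B}_k$, the Mellin--Barnes representation of $e^{2\pi i n x}$, the cancellation producing $\zeta'(k+s)/\zeta(k+s)$ against $\Gamma(s)\bigl(e^{i\pi s/2}+(-1)^k e^{-i\pi s/2}\bigr)(2\pi x)^{-s}$, a leftward contour shift with the constant $C_k$ coming from $s=0$, the polynomial from the poles at the negative integers above $-k$, and the error term governed by the poles at $s=\rho-k$. The only structural difference is that you stop the contour at $\Re(s)=\tfrac{1}{2}-k$ and bound the indented integral there, while the paper pushes the contour to $-\infty$ and writes an exact expansion consisting of a sum over the nontrivial zeros plus a power series in $x$; these are equivalent in substance, and you are more explicit than the paper about the converse direction.

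One step of yours does not hold up as written. At $s=1-k$ you correctly observe that the simple zero of the trigonometric factor cancels the pole of $\Gamma$, so that $\alpha$ is regular there, and that $\zeta'(k+s)/\zeta(k+s)$ acquires a simple pole from the pole of $\zeta$ at $1$; but then the integrand has only a \emph{simple} pole at $s=1-k$, and a simple pole cannot produce a $\log(x)x^{k-1}$ term --- its residue is a constant times $x^{k-1}$ with no logarithm. Your formula $\dot{h}=-\alpha(1-k)$ therefore does not follow from your own pole count. (The paper asserts a double pole at $s=-k+1$, which likewise appears to overlook the zero of the trigonometric factor; since the theorem only requires $\dot{h}$ to be \emph{some} computable constant, $\dot{h}=0$ is admissible and the statement survives, but the derivation should be fixed.) Beyond that, the two genuinely delicate points --- justifying the termwise inversion for a conditionally convergent series, and extracting the sharp $O(x^{k-\frac{1}{2}})$ on the line $\Re(s)=\tfrac{1}{2}-k$, where the general term of the corresponding sum over zeros is only $O(|\gamma|^{-k})$ and hence not absolutely summable when $k=1$ --- are flagged in your proposal but not carried out; the paper is no more detailed on either point.
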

\begin{proof} First, we note from [5, eq.(4.16)] that for $m\ge1,$
\begin{equation}\bar{B}_m(x)=-m!\sum_{n\neq0}(2\pi i n)^{-m}e^{2\pi i n x}.\end{equation}
Now, it is well-known [1, eq.(5.12)] that for $0<c<1,$
\begin{equation}\frac{1}{2\pi i}\int_{(c)}e^{s(i\pi/2-\log(2\pi)-\log(nx))}\Gamma(s)ds=e^{2\pi i nx}.\end{equation}
This integral is also noted in [8, pg.91, pg.406]. We will follow similar lines as [1] in constructing our integral. Combining (1.3) with (1.4), we may sum the desired series, by conditional convergence, to get
\begin{equation}-\frac{m!}{2\pi i}\int_{(c)}(e^{s(i\pi/2)}+(-1)^me^{-s(i\pi/2)})e^{-s(\log(2\pi)+\log(x))}\zeta(s+m)\Gamma(s)ds=(2\pi i)^m\bar{B}_m(x).\end{equation}
Here the gamma factor $\Gamma(s)$ is estimated by Stirling's formula [5, pg.151, eq.(5.112)] when $s=\sigma+it,$ $t\neq0,$ and $\zeta(s+m)$ is also bounded on the line $s=\sigma+it$ when $m\ge1,$ since [10, pg.95]
$$\zeta(s)=O(|t|^{k}),$$ for any fixed $\sigma_0>0$ and $\Re(s)>\sigma_0.$ Note that the $m$ even case corresponds to the Mellin transform of $\cos(t)$ and the $m$ odd case of this integral corresponds to the Mellin transform of $\sin(t),$ both of which are valid when $0<\Re(s)<1.$ \par
Now using the formula $-\sum_{n\ge1}\mu(n)\log(n)/n^s=\zeta'(s)/\zeta^2(s),$ $\Re(s)\ge1,$ we may again invert to get
\begin{equation}\frac{m!}{2\pi i}\int_{(c)}(e^{s(i\pi/2)}+(-1)^me^{-s(i\pi/2)})e^{-s(\log(2\pi)+\log(x))}\frac{\zeta'(s+m)}{\zeta(s+m)}\Gamma(s)ds\end{equation}
$$=(2\pi i)^m\sum_{n\ge1}\frac{\mu(n)\log(n)\bar{B}_m(nx)}{n^m}.$$
Define $K(s):=(e^{s(i\pi/2)}+(-1)^me^{-s(i\pi/2)}),$ and note that $|K(s)|\ll e^{|t|\pi/2}.$ In particular we see that 
$$\Bigg|\int_{c-i(T+\frac{1}{\log^2(T)})}^{c-iT}K(s)e^{-s(\log(2\pi)+\log(x))}\frac{\zeta'(s+m)}{\zeta(s+m)}\Gamma(s)ds\Bigg|$$
$$\begin{aligned}= e^{m(\log(2\pi)+\log(x))}\\
&\times \Bigg|\int_{c+m-i(T+\frac{1}{\log^2(T)})}^{c+m-iT}K(s-m)e^{-s(\log(2\pi)+\log(x))}\frac{\zeta'(s)}{\zeta(s)}\Gamma(s-m)ds\Bigg| \\ \end{aligned}$$
$$\gg e^{m(\log(2\pi)+\log(x))}\int_{-(T+\frac{1}{\log^2(T)})}^{-T}\bigg|K(s-m)e^{-s(\log(2\pi)+\log(x))}\frac{\zeta'(s)}{\zeta(s)}\Gamma(s-m)\bigg|dt$$
$$\gg e^{(m-\sigma)(\log(2\pi)+\log(x))}\left(\frac{\log(2)}{\cosh(\sigma\log(2))}+\frac{\zeta'(\sigma)}{\zeta(\sigma)}\right)T^{\sigma-\frac{1}{2}-m}\int_{-(T+\frac{1}{\log^2(T)})}^{-T}dt $$
$$=e^{(m-\sigma)(\log(2\pi)+\log(x))}\left(\frac{\log(2)}{\cosh(\sigma\log(2))}+\frac{\zeta'(\sigma)}{\zeta(\sigma)}\right)\frac{T^{\sigma-\frac{1}{2}-m}}{\log^2(T)}.$$
Here we have made the change of variable $s\rightarrow s-m$ and estimated $|\frac{\zeta'(s)}{\zeta(s)}|$ using arguments from [10, Theorem 11.5(A)]. Hence, we require $\sigma-\frac{1}{2}-m<0$ for convergence. Since we know $0<\sigma<1,$ our condition $m\ge1$ is sufficient. (For similar examples and arguments related to algebraic decay see [8, pg.127].) We integrate over the positively oriented rectangle with corners $(c,iT),$ $(-M-\frac{1}{2},iT),$ $(-M-\frac{1}{2},-iT),$ and $(c,-iT),$ and sufficiently large $M>0.$ We move the line of integration of (1.6) to the left and compute the residues of the poles at the non-trivial zeros $s=-m+\rho,$ the residue at the pole $s=0,$ and poles at the negative integers. We compute the residues when $s=-l,$ for $l< m,$ giving the polynomial of degree $m-1$ plus the term $\dot{h}\log(x)x^{m-1}$ arising from the double pole at $s=-m+1$ (the $\Upsilon_{m-1}(x)).$ This term is included since $\log(x)x^{m-1}\le x^{m-1/2}$ implies $\log(x)\le x^{1/2},$ which is valid when $x\in(0,\infty).$ We find,
$$(2\pi i)^m\sum_{n\ge1}\frac{\mu(n)\log(n)\bar{B}_m(nx)}{n^m}=C_m+ \Upsilon_{m-1}(x)$$
$$+m!\sum_{\rho}K(\rho-m)e^{-(\rho-m)(\log(2\pi)+\log(x))}\Gamma(\rho-m)$$
$$+\frac{m!}{2\pi i}\int_{(d)}K(s)e^{-s(\log(2\pi)+\log(x))}\frac{\zeta'(s+m)}{\zeta(s+m)}\Gamma(s)ds,$$
with $C_m=m!(1+(-1)^m)\frac{\zeta'(m)}{\zeta(m)},$ when $m>1,$ $C_1=0,$
where $-m<d<-m+\frac{1}{2}.$ Next we consider when $l\ge m.$ Computing the residues at the double poles $s=-m-2l,$ $l\in\mathbb{N}_0,$ give rise to a series over $l$ of the form $\sum_{l\ge0}(p_l+r_l\log(x))x^{m+2l}.$ The poles at $s=-m-2l-1$ give rise to a series of the form $\sum_{l\ge0}q_lx^{m+2l+1}.$ (Here $p_l, $ $r_l,$ and $q_l$ are computable constants.) Combining these observations we find that 
$$(2\pi i)^m\sum_{n\ge1}\frac{\mu(n)\log(n)\bar{B}_m(nx)}{n^m}=C_m$$
$$+m!\sum_{\rho}K(\rho-m)e^{-(\rho-m)(\log(2\pi)+\log(x))}\Gamma(\rho-m)$$
$$+\Upsilon_{m-1}(x)+\sum_{l\ge0}((p_l+r_l\log(x))x^{m+2l}+q_lx^{m+2l+1}).$$
If we let $x$ become increasingly small we find the desired result upon inspecting the term $e^{-(\rho-m)\log(x)}$ in the sum over $\rho$ and then replacing $m$ with $k.$ That is, the equivalence of the Riemann hypothesis follows from the condition that the non-trivial zeros must have $\Re(\rho)=\frac{1}{2},$ and hence we estimate the sum by $O(e^{-(\frac{1}{2}-m)\log(x)})$ and negate terms from the last series we computed.\end{proof}
In Titchmarsh [10, pg.198, eq.(8.9.10)], we find the arithmetic function $b_r(n),$ $r\in\mathbb{N},$ and its Dirichlet generating function
\begin{equation} \frac{1}{\zeta^{r}(s)}=\sum_{n\ge1}\frac{b_r(n)}{n^s},\end{equation}
for $\Re(s)>1.$ We offer an analogue of Theorem 1.1 for $b_r(n).$
\begin{theorem} Let $k\ge1$ be a natural number, $r>1,$ and let $\bar{\Upsilon}_k(x)$ be a polynomial of degree $k.$ Put $C_k=\frac{k!}{(2\pi i)^k}(1+(-1)^k)\frac{1}{\zeta^{r-1}(k)},$ when $k>1,$ and $C_1=0.$ We have that the Riemann Hypothesis is equivalent to 
\begin{equation}\sum_{n\ge1}\frac{b_r(n)}{n^k}\bar{B}_{k}(nx)=C_k+\bar{\Upsilon}_{k-1}(x)+O(x^{k-\frac{1}{2}}),\end{equation}
as $x\rightarrow0^{+}.$
\end{theorem}
\begin{proof}The proof is identical to Theorem 1.1, but we estimate our integral with $|\zeta(s)|\le \zeta(\sigma)$ for $\sigma>1,$ which implies (since $\Re(s)>1$ is a zero free region for $\zeta(s)$)
$$\frac{1}{\zeta^{r-1}(\sigma)}\le \frac{1}{|\zeta^{r-1}(s)|}.$$
The computation involving the sum over $\rho$ includes computing the residues
$$R_{\rho,m,r}(x):=\lim_{s\rightarrow\rho-m}\frac{1}{(r-2)!}\frac{d^{r-2}}{ds^{r-2}}\left((s-\rho+m)^{r-1}K(s)(2\pi x)^{-s}\frac{1}{\zeta^{r-1}(s+m)}\Gamma(s)\right),$$
and further, if $\Re(\rho)=\frac{1}{2},$
$$\sum_{\rho}R_{\rho,k,r}(x)=O(x^{k-\frac{1}{2}}),$$
since the terms involving $\log(x)^{r-2}x^{k-\frac{1}{2}}$ decay faster as $x\rightarrow0^{+}.$
The polynomial $\bar{\Upsilon}_k(x)$ is defined as in the theorem, since this time the pole at $s=-m+1$ is simple. The remaining details are left to the interested reader. \end{proof}

\section{Some Further observations}
We mention some corollaries that are related to the $k=2$ series from (1.5). It can be obtained from (1.3) and the property $\{-x\}=1-\{x\}$ that 
\begin{equation} \bar{B}_1(x)^2-\frac{1}{12}=\frac{1}{2\pi^2}\sum_{n\ge1}\frac{e^{2\pi ixn}}{n^2},\end{equation}
\begin{equation} \sum_{n\ge1}\left(\frac{\bar{B}_1(nx)}{n}\right)^2-\frac{\pi^2}{72}=\frac{1}{2\pi^2}\sum_{n\ge1}d(n)\frac{e^{2\pi ixn}}{n^2},\end{equation}
where $d(n)$ is the number of divisors of $n.$ Furthermore, we have
\begin{equation}\sum_{n\ge1}\frac{\mu(n)}{n^2}\bar{B}_1(nx)^2=\frac{1}{2\pi^2}\left(e^{2\pi i x}+1\right),\end{equation}
\begin{equation}\sum_{n\ge1}\left(\frac{\mu(n)}{n}\bar{B}_1(nx)\right)^2=\frac{1}{12}\frac{\zeta^2(2)}{\zeta(4)}+\frac{1}{2\pi^2}\sum_{n\ge1}\frac{2^{v(n)}}{n^2}e^{2\pi i xn},\end{equation}
\begin{equation}\sum_{n\ge1}\frac{\lambda(n)}{n^2}\bar{B}_1(nx)^2=\frac{1}{12}\frac{\zeta(4)}{\zeta(2)}+\frac{1}{2\pi^2}\sum_{n\ge1}\frac{1}{n^4}e^{2\pi i xn^2},\end{equation}
where $v(n)$ is the number of different prime factors of $n$ [10, pg.5, eq.(1.2.8)], and $\lambda(n)$ is equal to $(-1)^k$ when $n$ has $k$ prime factors, counted according to the degree of the factor [10, pg.6, eq.(1.2.11)]. The series considered in [2,9] may be obtained from (2.1) (or (2.3)) by differentiating and then taking the real part. The series on the right hand side of (2.5) is related to a function of Riemann in considering a function which is non-differentiable [7]. Equivalent results could be imposed on (2.5) regarding twice differentiability. Recall that a periodic function with a continuous first order derivative has a uniformly convergent Fourier series. In [4, pg.216], several relevant results are given concerning convergence, one of which is that the series
$$\sum_{n\ge1}\frac{1}{n}e^{2\pi in^2 x},$$ is not convergent for all irrational $x.$ 
We close with a short proof concerning convergence of some of our arithmetic sums.
\begin{corollary} The function defined by
$$f(z)=\sum_{n\ge1}\left(\frac{\mu(n)}{n}\bar{B}_1(nz)\right)^2,$$ is $1$-periodic, converges locally uniformly on $\mathbb{H}:=\{z\in\mathbb{C}: \Re(z)>0\},$ and consequently is analytic there. Furthermore, the same can be said about (2.3).
\end{corollary}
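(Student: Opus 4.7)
The plan is to read off all three assertions directly from the closed form (2.3) established in the preceding discussion,
\begin{equation*}
f(z) \;=\; \frac{1}{12}\frac{\zeta^2(2)}{\zeta(4)} \;+\; \frac{1}{2\pi^2}\sum_{n\ge1}\frac{2^{v(n)}}{n^2}\,e^{2\pi i z n},
\end{equation*}
which I would take as the definition of the analytic extension of $f$ to $\mathbb{H}$. Regarded as a $q$-series in $q = e^{2\pi i z}$, this representation reduces the corollary to two routine inputs: a bound on the coefficients and exponential decay of the exponential factor.

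First I would bound the coefficients. Since $2^{v(n)}$ counts the squarefree divisors of $n$, we have $2^{v(n)} \le d(n) \ll_{\varepsilon} n^{\varepsilon}$ for every $\varepsilon > 0$, by the classical divisor bound. Next, fix a compact set $K \subset \mathbb{H}$. Because $K$ lies in an open half-plane, the quantity $\delta_K := \min_{z \in K} \Im(z)$ is strictly positive, and hence $|e^{2\pi i z n}| = e^{-2\pi n\Im(z)} \le e^{-2\pi n \delta_K}$ for all $z \in K$ and all $n \ge 1$. This gives the uniform estimate
\begin{equation*}
\left|\frac{2^{v(n)}}{n^2}\,e^{2\pi i z n}\right| \;\ll_{\varepsilon}\; n^{\varepsilon-2}\,e^{-2\pi n \delta_K}\qquad (z\in K),
\end{equation*}
whose right-hand side is summable. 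By the Weierstrass $M$-test the series converges uniformly on $K$; since each summand is an entire function of $z$, the limit $f$ is holomorphic on $\mathbb{H}$.

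Finally, $1$-periodicity is immediate from the series form: $e^{2\pi i(z+1)n} = e^{2\pi i z n}$ for every integer $n$, so every term is $1$-periodic and therefore so is the sum. The only delicate point — not really an obstacle — is accepting (2.3) as the definition of $f$ on the complex domain $\mathbb{H}$ (the series-of-squares expression $\sum (\mu(n)\bar{B}_{1}(nz)/n)^2$ being defined a priori only on the real axis). Once this identification is in place, everything else is a standard application of the $M$-test together with the divisor bound.
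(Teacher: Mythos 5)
Your proposal is correct and follows essentially the same route as the paper: both arguments treat the closed form (2.3) as the working definition of $f$ off the real axis, bound the coefficients $2^{v(n)}/n^2$ (the paper extracts $O(n^{-1})$ from the abscissa of absolute convergence of $\sum 2^{v(n)}n^{-s-2}$, you get the sharper $O(n^{\varepsilon-2})$ from the divisor bound; either suffices), and then let the decay of $e^{2\pi i zn}$ give local uniform convergence, hence analyticity, with $1$-periodicity read off termwise. You simply carry out the "clearly follows" step of the paper in full detail via the Weierstrass $M$-test.

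One point deserves flagging: your step ``$K$ lies in an open half-plane, hence $\delta_K=\min_{z\in K}\Im(z)>0$'' is a statement about the \emph{upper} half-plane, whereas the corollary defines $\mathbb{H}$ by $\Re(z)>0$. For a compact subset of the right half-plane one can have $\Im(z)\le 0$, in which case $|e^{2\pi izn}|$ does not decay (and for $\Im(z)<0$ the series in (2.3) diverges outright). The statement as printed, and the paper's own one-line proof, suffer from the same defect, so you have in effect silently corrected what is presumably a typo; you should say explicitly that your argument establishes local uniform convergence on $\{z:\Im(z)>0\}$ (and absolute convergence on the real line from the $O(n^{\varepsilon-2})$ coefficient bound alone), not on the set $\{z:\Re(z)>0\}$ as written.
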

\begin{proof} Since the Dirichlet series $\sum_{n\ge1}a_n/n^s,$ with $a_n=2^{v(n)}/n^2$ converges absolutely for $\Re(s)>-1,$ we have $a_n=O(n^{-1}).$ The result now clearly follows when comparing with the right side of (2.4). The series (2.3) follows from the comparison test.
\end{proof}

1390 Bumps River Rd. \\*
Centerville, MA
02632 \\*
USA \\*
E-mail: alexpatk@hotmail.com, alexepatkowski@gmail.com


\begin{thebibliography}{9}

\bibitem{ConcreteMath} K. Chakraborty, S. Kanemitsu, H. Tsukada, \emph{Arithmetical Fourier series and the modular relation,} Kyushu Journal of Math. Vol. 66 (2012) No. 2 p. 411--427

\bibitem{ConcreteMath} H. Davenport, \emph{On some infinite series involving arithmetic function,} Quarterly Journal of Mathematics, 8 (1937), pp. 8--13.

\bibitem{ConcreteMath} G. H. Hardy and J. E. Littlewood, \emph{Contributions to the Theory of the Riemann Zeta-Function and the
Theory of the Distribution of Primes,} Acta Math., 41 (1916), 119--196.

\bibitem{ConcreteMath} G. H. Hardy and J. E. Littlewood, \emph{Some problems of Diophantine approximation,} Trans. Cambridge Philos. Soc., 27, 519--534 (1923)

\bibitem{ConcreteMath} H. Iwaniec and E. Kowalski, \emph{Analytic number theory,} American Mathematical Society Colloquium Publications, vol. 53, American Mathematical Society, Providence, RI, 2004.

\bibitem{ConcreteMath} H.L. Li, J. Ma, W.P. Zhang, \emph{On some Diophantine Fourier series,} Acta Math. Sinica (Engl.
Ser.) 26 (2010) 1125--1132.

\bibitem{ConcreteMath} W. Luther, \emph{The differentiability of Fourier gap series and "Riemann's example" of a continuous, Nondifferentiable function,} J. Approx. Theory 48 (1986), 303--321.

\bibitem{ConcreteMath} R. B. Paris, D. Kaminski, \emph{ Asymptotics and Mellin--Barnes Integrals.} Cambridge University Press. (2001)

\bibitem{ConcreteMath} S. Segal, \emph{On an identity between infinite series of arithmetic functions,} Acta Arithmetica 28.4 (1976): 345--348

\bibitem{ConcereteMath} E. C. Titchmarsh, \emph{The theory of the Riemann zeta function,} Oxford University Press,
2nd edition, 1986.



\end{thebibliography}
\end{document}